\newtheorem{thm}{Theorem}
\newtheorem{lemme}{Lemma}
\newtheorem{prop}{Proposition}
\theoremstyle{remark}
\newtheorem{rmk}{Remark}
\newtheorem{defi}{Definition}
\newtheorem{nota}{Notation}
\DeclareMathOperator{\Z}{\mathbb{Z}}
\DeclareMathOperator{\Supp}{Supp}
\DeclareMathOperator{\tors}{tors}
\newcommand{\eq}[1][r]
{\ar@<-3pt>@{-}[#1]
\ar@<-1pt>@{}[#1]|<{}="gauche"
\ar@<+0pt>@{}[#1]|-{}="milieu"
\ar@<+1pt>@{}[#1]|>{}="droite"
\ar@/^2pt/@{-}"gauche";"milieu"
\ar@/_2pt/@{-}"milieu";"droite"}
\newcommand{\opnorm}{\@ifstar\@opnorms\@opnorm}
\newcommand{\@opnorms}[1]{%
  \left|\mkern-1.5mu\left|\mkern-1.5mu\left|
   #1
  \right|\mkern-1.5mu\right|\mkern-1.5mu\right|
}
\newcommand{\@opnorm}[2][]{%
  \mathopen{#1|\mkern-1.5mu#1|\mkern-1.5mu#1|}
  #2
  \mathclose{#1|\mkern-1.5mu#1|\mkern-1.5mu#1|}
}
\begin{document}
\title{\emph{Erratum}:\ \bf  Integral cohomology of the Generalized Kummer fourfold}

\author{Simon \textsc{Kapfer}; Grégoire \textsc{Menet}} 

\maketitle
It was pointed out by B. Totaro that the reference used for \cite[Theorem 5.2]{Original} is inappropriate. 
The latter concern the torsion of the integral cohomology of the generalized Kummer. 
Here we show that \cite[Theorem 5.2]{Original} holds at least in dimension 4.
All the other results of \cite{Original} remain unaffected.
It would also be interesting to find out whether the generalized Kummer varieties of higher dimension have torsion-free cohomology or not.
 \begin{center}
 \begin{large}
 \textbf{The integral cohomology of the generalized Kummer fourfold is torsion free}
 \end{large}
 \end{center}
Let $A$ be a 2-dimensional complex torus. Let $A^{[3]}$ be the Hilbert scheme of 3 points on $A$ and $s:A^{[3]}\rightarrow A$ the summation morphism.  
The generalized Kummer fourfold is defined by $K_2(A):=s^{-1}(0)$.
We can also consider the following embedding: $j:A\times A\hookrightarrow A\times A \times A: (x,y)\rightarrow (x,y,-x-y)$.
The action of the symmetric group $\mathfrak{S}_3$ on $A\times A \times A$ provides an action on $A\times A$ via the embedding $j$.
Then $K_2(A)$ can also be seen as a resolution of $\left(A\times A\right)/\mathfrak{S}_3$. 
\begin{thm}\label{main}
The cohomology $H^*(K_2(A),\Z)$ is torsion free.
\end{thm}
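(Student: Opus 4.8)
The plan is to prove torsion-freeness by combining elementary topological reductions with an explicit analysis of the resolution $\pi\colon K_2(A)\to (A\times A)/\mathfrak{S}_3$ described above. First I would exploit that $K_2(A)$ is a simply connected compact manifold of real dimension $8$: simple connectivity forces $H^1(K_2(A),\Z)=0$, and by the universal coefficient theorem $H^2(K_2(A),\Z)$ is then torsion free. Combining the universal coefficient theorem with Poincar\'e duality, the torsion subgroups satisfy $T\bigl(H^k\bigr)\cong T\bigl(H^{9-k}\bigr)$, so the groups $H^0,H^1,H^2,H^7,H^8$ are automatically torsion free and it suffices to treat the middle degrees, namely to show that $H^3$ and $H^4$ (whence also $H^5,H^6$) carry no torsion.

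Second, I would reduce to the primes dividing the order of the group. It is equivalent to check, for every prime $p$, that $\dim_{\F}H^*(K_2(A),\F)$ equals the total rational Betti number $\sum_k b_k=140$ (recall $b_\bullet=1,0,7,8,108,8,7,0,1$, with Euler characteristic $108$). For $p\nmid|\mathfrak{S}_3|=6$ a transfer argument identifies $H^*((A\times A)/\mathfrak{S}_3,\F)$ with the $\mathfrak{S}_3$-invariants of $H^*(A\times A,\F)$, and all the exceptional contributions of the resolution are built from projective bundles, hence torsion free; so the only dangerous primes are $p=2$ and $p=3$.

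Third comes the geometric heart. I would stratify the fixed locus of the $\mathfrak{S}_3$-action on $V=A\times A$: the three transpositions fix three surfaces isomorphic to $A$, which meet exactly along the $81$ points of $A[3]$ fixed by the $3$-cycles. Consequently $(A\times A)/\mathfrak{S}_3$ carries a surface of transverse $A_1$-singularities together with $81$ deeper points whose local model is $(\mathbb{C}^2\otimes W)/\mathfrak{S}_3$, where $W$ is the standard $2$-dimensional representation of $\mathfrak{S}_3$. The resolution $\pi$ replaces the $A_1$-surface by a $\mathbb{P}^1$-bundle and each of the $81$ points by the exceptional fibre $E$ of a crepant resolution of this local model. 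I would compute the integral cohomology of these local pieces, checking that it is torsion free with the expected Betti numbers, and then glue the local data to the global one through the long exact sequence of the pair $(K_2(A),E)$ --- equivalently a Mayer--Vietoris decomposition into a neighbourhood of the exceptional locus $E$ and the smooth part $(V\setminus\Fix)/\mathfrak{S}_3$ --- comparing $H^*(K_2(A),\F)$ with $H^*((A\times A)/\mathfrak{S}_3,\F)$ and $H^*(E,\F)$.

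The main obstacle is exactly the behaviour at the $81$ points in characteristics $2$ and $3$. Away from them the singularities are transverse $A_1$, whose resolution is harmless and whose rational computation is governed by the decomposition theorem; but in characteristic $2$ or $3$ the decomposition theorem may fail, and it is precisely the local resolution of $(\mathbb{C}^2\otimes W)/\mathfrak{S}_3$ --- where the full group $\mathfrak{S}_3$, of order divisible by both bad primes, acts --- that could a priori create $2$- or $3$-torsion. The crux of the proof is therefore to show that this local model resolves with torsion-free mod-$p$ cohomology and that the connecting maps in the Mayer--Vietoris sequence are, after reduction mod $p$, of maximal rank, so that $\dim_{\F}H^*(K_2(A),\F)=140$ for $p=2,3$ as well, yielding torsion-freeness.
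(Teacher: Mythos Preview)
Your outline is coherent through the reduction to $p\in\{2,3\}$, but the step you yourself label ``the crux'' is stated rather than proved: you would need the integral (or mod-$p$) cohomology of the local crepant resolution of $(\C^2\otimes W)/\mathfrak{S}_3$, and then a rank computation for the connecting maps in Mayer--Vietoris to rule out torsion created in the gluing. Neither is carried out, and both are nontrivial. Note in particular that the exceptional fibre over each of the $81$ points is $W_\tau\simeq\mathbb{P}(1,1,3)$, a \emph{singular} weighted projective plane, so your ``projective bundles, hence torsion free'' heuristic already needs adjustment even for $p\nmid 6$; and for $p=2,3$ the global connecting maps are precisely where torsion could appear, so asserting they have maximal rank is essentially the entire content of the theorem. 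As written this is a plan with its decisive step missing.

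The paper takes a different route that avoids the local analysis at the $81$ points entirely. It excises the loci $W_\tau$ to pass to an open $U\subset K_2(A)$ and checks, via Thom isomorphism for the pairs $(K_2(A),U')$ and $(U',U)$, that $\tors H^3$ and $\tors H^5$ of $K_2(A)$ are detected on $U$. One has $U\simeq\widetilde{V}/\mathfrak{S}_3$ with $\widetilde{V}$ a blow-up of $V=(A\times A)\setminus A[3]$. The decisive trick is a pair of intermediate quotients of \emph{coprime} index together with transfer: the degree-$3$ cover $\widetilde{V}/\mathfrak{S}_2\to U$ is identified with an open piece of a blow-up of $A^{[2]}$, hence has torsion-free cohomology by Totaro's computation for $A^{[2]}$, forcing $H^*(U,\Z)$ to have at most $3$-torsion; the degree-$2$ cover $\widetilde{V}/\mathfrak{A}_3\to U$ then forces at most $2$-torsion, hence none. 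For this second cover, $\mathfrak{A}_3$ acts \emph{freely} on $V$, so the equivariant-cohomology spectral sequence is available, and the relevant $E_2$-terms $H^p(\mathfrak{A}_3,H^q(V,\Z))$ vanish by an explicit Jordan-block analysis of $H^*(A\times A,\mathbb{F}_3)$ as an $\mathfrak{A}_3$-module. In effect the paper trades your local-resolution and connecting-map analysis for a global, purely representation-theoretic computation on a free quotient, which both bypasses the singular fibre $\mathbb{P}(1,1,3)$ and eliminates any need to control ranks of boundary maps.
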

\begin{rmk}
We denote by $\tors$ the torsion of groups.
Because of the Poincaré duality and the universal coefficient theorem, we have:
\begin{align*}
\tors H^4=\tors H_4&=\tors H^5,& \tors H^6=\tors H_2&=\tors H^3,& \tors H^7=\tors H_1&=\tors H^2=0.
\end{align*}
Thus, it suffices to prove that $H^3(K_2(A),\Z)$ and $H^5(K_2(A),\Z)$ are torsion free. Moreover, since Theorem \ref{main} is only a topological result, without loss of generality, we can assume that $A$ is an abelian surface.
\end{rmk}
Let
$W_\tau\subset K_2(A)$ be the locus of subschemes supported at $\tau\in A[3]$.
As it is explained in \cite[Section 4]{Hassett}, we have:
\begin{equation}
W_\tau\simeq \mathbb{P}(1,1,3).
\label{Briançon}
\end{equation}
Let  $pt_\tau \in W_\tau$ be the singular point 
and $W_\tau^*:=W_\tau\smallsetminus pt_\tau$.
Put
\begin{align*}
U&:=K_2(A)\setminus \bigcup_{\tau\in A[3]} W_\tau, &
U'&:=K_2(A)\setminus \bigcup_{\tau\in A[3]} pt_\tau.
\end{align*}
\begin{lemme}
We have $\tors H^3(U,\Z)=\tors H^3(K_2(A),\Z)$ and an injection $\tors H^5(K_2(A),\Z)\hookrightarrow\tors H^5(U,\Z)$.
\end{lemme}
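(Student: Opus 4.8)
The plan is to compare $H^*(K_2(A),\Z)$ and $H^*(U,\Z)$ through the long exact sequence of the pair $(K_2(A),U)$, reducing everything to the homology of the removed locus. Write $Z:=\bigcup_{\tau\in A[3]}W_\tau$; since a length-$3$ subscheme concentrated at $\tau$ cannot be concentrated at a distinct point $\tau'$, the $W_\tau$ are pairwise disjoint and $Z=\coprod_\tau W_\tau$. The relevant portion of the sequence reads
$$\cdots \to H^k(K_2(A),U) \to H^k(K_2(A)) \xrightarrow{\rho} H^k(U) \to H^{k+1}(K_2(A),U) \to \cdots,$$
so it suffices to control the relative groups $H^k(K_2(A),U)$.

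First I would identify these relative groups with the homology of $Z$. Since $K_2(A)$ is a compact complex fourfold it carries a canonical orientation, and $Z$ is a closed (hence compact) subset of real dimension $4$; Alexander--Lefschetz duality then gives
$$H^k(K_2(A),U;\Z) \cong H^{BM}_{8-k}(Z;\Z) = \bigoplus_{\tau\in A[3]} H_{8-k}(W_\tau;\Z),$$
where Borel--Moore homology coincides with ordinary homology because $Z$ is compact. One cannot invoke a Thom isomorphism here, since each $W_\tau\simeq\mathbb{P}(1,1,3)$ is singular at $pt_\tau$; this is exactly why the duality formulation is the right tool, and it is the only genuinely delicate point of the argument.

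The key computational input is the homology of $W_\tau\simeq\mathbb{P}(1,1,3)$. The stratification by the vanishing of successive weighted coordinates exhibits $\mathbb{P}(1,1,3)$ as a union of locally closed pieces $\C^2$, $\C$ and a point; the two unit weights make the first two strata genuine affine spaces rather than quotients, so this is a CW structure with exactly one cell in each even real dimension $0,2,4$. The cellular chain complex is concentrated in even degrees, hence all differentials vanish and $H_j(W_\tau;\Z)=\Z$ for $j\in\{0,2,4\}$ and $0$ otherwise; in particular $H_*(W_\tau)$ is torsion free and vanishes in odd degrees.

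Plugging in, for $k=3$ one finds $H^3(K_2(A),U)=\bigoplus_\tau H_5(W_\tau)=0$ and $H^4(K_2(A),U)=\bigoplus_\tau H_4(W_\tau)=\bigoplus_\tau\Z$, so the sequence yields a short exact sequence $0\to H^3(K_2(A))\xrightarrow{\rho}H^3(U)\to C\to 0$ in which $C$ embeds into $\bigoplus_\tau\Z$ and is therefore torsion free. A short exact sequence with torsion-free cokernel induces an isomorphism on torsion subgroups, which gives $\tors H^3(U)=\tors H^3(K_2(A))$. For $k=5$ one has $H^5(K_2(A),U)=\bigoplus_\tau H_3(W_\tau)=0$, so $\rho$ is injective in degree $5$ and restricts to the desired injection $\tors H^5(K_2(A))\hookrightarrow\tors H^5(U)$. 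Once the duality identification of the relative cohomology with the homology of the singular locus $Z$ is in place, the torsion-freeness of $\mathbb{P}(1,1,3)$ completes the proof.
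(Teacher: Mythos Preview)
Your argument is correct, and it is a genuinely different route from the one in the paper. The paper avoids applying any duality statement to the singular surfaces $W_\tau$ by introducing the intermediate open set $U':=K_2(A)\setminus\bigcup_\tau pt_\tau$: first the Thom isomorphism for the smooth points $pt_\tau\subset K_2(A)$ gives $H^3(K_2(A))\cong H^3(U')$ and $H^5(K_2(A))\cong H^5(U')$, and then the Thom isomorphism for the \emph{smooth} open surfaces $W_\tau^*\subset U'$ identifies $H^k(U',U)$ with $\bigoplus_\tau H^{k-4}(W_\tau^*)$; the required vanishing $H^1(W_\tau^*)=0$ is obtained by viewing $W_\tau^*$ as a quotient of $\mathbb{P}^2\setminus\{pt\}$ by a cyclic group of order $3$. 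Your approach instead handles the singular locus in one stroke via Alexander--Lefschetz duality $H^k(K_2(A),U)\cong H_{8-k}(Z)$, which is legitimate because $K_2(A)$ is a closed oriented manifold and each $W_\tau$ is a compact complex variety (hence an ANR, so \v{C}ech and singular homology agree). The affine paving of $\mathbb{P}(1,1,3)$ by successively setting the weight--$1$ coordinates to zero then gives the torsion-free homology you need. Your route is more direct and buys you a single long exact sequence; the paper's route stays within the Thom isomorphism for smooth submanifolds at the cost of an extra step.
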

\begin{proof}
The generalized Kummer $K_2(A)$ is smooth in $pt_\tau$. Hence, applying Thom's isomorphism to the long exact sequence of the relative cohomology
of the pair $(K_2(A),U')$,
we obtain:
\begin{equation}
H^3(K_2(A),\Z)=H^3(U',\Z)\ \text{and}\ H^5(K_2(A),\Z)=H^5(U',\Z).
\label{U'}
\end{equation}
Moreover:
\begin{equation}
\xymatrix@C7pt@R0pt{ H^3(U',U,\Z)\ar[r] & H^{3}(U',\Z)\ar[r] & H^{3}(U,\Z)\ar[r]&H^{4}(U',U,\Z)\ar[r] & H^{4}(U',\Z)\ar[r] & \\
\ar[r]&H^{4}(U,\Z)\ar[r]&H^{5}(U',U,\Z)\ar[r]& H^{5}(U',\Z)\ar[r] & H^{5}(U,\Z)\ar[r]&H^{6}(U',U,\Z)
.}
\label{exact1}
\end{equation}
By Thom's isomorphism, $H^3(U',U,\Z)=0$ and $H^{4}(U',U,\Z)=\bigoplus_{\tau\in A[3]}H^0(W_\tau^*,\Z)$ is torsion free.
Furthermore, $H^{5}(U',U,\Z)=\bigoplus_{\tau\in A[3]}H^1(W_\tau^*,\Z)=0$ since, by (\ref{Briançon}), $W_\tau^*$ is the quotient of $\mathbb{P}^2\smallsetminus \left\{pt\right\}$ by an automorphism of order 3. 
It follows from (\ref{exact1}):
$$\tors H^3(U',\Z)=\tors H^3(U,\Z)\ \text{and}\ \tors H^5(U',\Z)\hookrightarrow\tors H^5(U,\Z).$$
Then (\ref{U'}) concludes the proof.
\end{proof}
Hence, it remains to prove that $H^3(U,\Z)$ and $H^5(U,\Z)$ are torsion free. 
To do so, we consider
$$V:=(A\times A)\ \smallsetminus\  A[3],$$
where $A[3]$ is embedded in $A\times A$ diagonally: $A[3]\hookrightarrow A\times A: x\rightarrow (x,x)$.

Let $r:\widetilde{V}\rightarrow V$ be the blow-up of $V$ in 
\begin{align*}
\Delta&:=\left\{\left.(x,x)\in V\right|\ x\in A\right\}, &
S_1 &:=\left\{\left.(x,-2x)\in V\right|\ x\in A\right\}, &
S_2&:=\left\{\left.(-2x,x)\in V\right|\ x\in A\right\}.
\end{align*}
As explained in \cite[Section 7]{Beauville}, 
we have: 
\begin{equation}
U\simeq \widetilde{V}/\mathfrak{S}_3.
\label{Beauvilleequa}
\end{equation}
\begin{lemme}\label{2torsion}
The groups $H^3(U,\Z)$ and $H^5(U,\Z)$ can only have 3-torsion. 
\end{lemme}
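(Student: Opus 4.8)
The plan is to combine a transfer argument for the $\mathfrak{S}_3$-quotient $U\simeq\widetilde{V}/\mathfrak{S}_3$ with a closer analysis of the $2$-Sylow subgroup. First I would record that $\widetilde{V}$ itself has torsion-free cohomology. The space $A\times A$ is a complex torus, so $V=(A\times A)\smallsetminus A[3]$ differs from it only by the removal of finitely many points and still has torsion-free cohomology in the degrees that matter; moreover the three centres $\Delta,S_1,S_2$ are pairwise disjoint in $V$, precisely because their pairwise intersections in $A\times A$ lie over the diagonal $3$-torsion that has been removed. Since each centre is isomorphic to $A$ and has complex codimension $2$, the blow-up formula gives $H^k(\widetilde{V},\Z)=H^k(V,\Z)\oplus\bigoplus_{i}H^{k-2}(A,\Z)$, which is torsion-free. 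As $\lvert\mathfrak{S}_3\rvert=6$, the composite $H^*(U,\Z)\xrightarrow{\,\pi^*\,}H^*(\widetilde{V},\Z)\xrightarrow{\,\mathrm{tr}\,}H^*(U,\Z)$ is multiplication by $6$, so every torsion class of $H^*(U,\Z)$ lies in $\ker\pi^*$ and is killed by $6$; as $H^*(\widetilde{V},\Z)$ is torsion-free this already excludes all $p$-torsion for $p\geq 5$.

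It remains to rule out $2$-torsion. For this I would localise at $2$ and use the $2$-Sylow subgroup $\langle\sigma\rangle\subset\mathfrak{S}_3$, where $\sigma$ is the transposition acting on $V$ by $\sigma(x,y)=(y,x)$. The transfer for the degree-$3$ quotient map $\widetilde{V}/\langle\sigma\rangle\to U$, whose degree is invertible in $\Z_{(2)}$, exhibits $H^*(U,\Z_{(2)})$ as a direct summand of $H^*(\widetilde{V}/\langle\sigma\rangle,\Z_{(2)})$. Hence it suffices to prove that $\widetilde{V}/\langle\sigma\rangle$ has no $2$-torsion.

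Here the geometry of $\sigma$ is favourable. On $V$ the fixed locus of $\sigma$ is exactly the diagonal $\Delta$, and $\sigma$ acts by $-1$ on its rank-$2$ normal bundle; therefore on the blow-up $\widetilde{V}$ the fixed locus is the whole exceptional divisor $E_\Delta$ (the involution acts trivially on $\mathbb{P}(N_\Delta)$ since $-1$ is a scalar) and $\sigma$ acts by $-1$ on the single normal coordinate to $E_\Delta$. Thus $\sigma$ is a reflection along a smooth divisor, so the quotient $\widetilde{V}/\langle\sigma\rangle$ is smooth. I would then identify it geometrically: since $\sigma$ fixes $\Delta$ and interchanges $S_1$ with $S_2$, blowing up $\Delta$ and passing to the quotient yields an open subset of the Hilbert scheme $A^{[2]}=\bigl(\mathrm{Bl}_\Delta(A\times A)\bigr)/\langle\sigma\rangle$, inside which the images of $S_1$ and $S_2$ merge into a single smooth surface that is then blown up. As $H^*(A^{[2]},\Z)$ is torsion-free and blowing up a smooth centre with torsion-free cohomology preserves this property, $\widetilde{V}/\langle\sigma\rangle$ has torsion-free cohomology, and in particular no $2$-torsion. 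Combined with the first paragraph, this leaves only possible $3$-torsion, proving the lemma.

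The step I expect to be the main obstacle is the precise geometric identification of $\widetilde{V}/\langle\sigma\rangle$, together with the local check, through the blow-up chart, that $\sigma$ genuinely acts as a reflection fixing $E_\Delta$ pointwise; one must also cite (or reprove) the torsion-freeness of $H^*(A^{[2]},\Z)$. Everything else — the disjointness of the centres, the blow-up formula, and the two transfer arguments — is routine.
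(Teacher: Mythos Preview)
Your approach is essentially the paper's: both reduce to the degree-$3$ cover $\widetilde{V}/\mathfrak{S}_2\to U$ via the transfer, identify $\widetilde{V}/\mathfrak{S}_2$ with an open piece of a blow-up of $A^{[2]}$, and conclude from Totaro's torsion-freeness of $H^*(A^{[2]},\Z)$ together with the blow-up formula. Two minor remarks. First, your opening paragraph is redundant: once the degree-$3$ cover is known to have torsion-free $H^3$ and $H^5$, the transfer already rules out all $p$-torsion for $p\neq 3$, so there is no need to treat $p\geq 5$ separately via $\widetilde{V}$. Second, the paper reverses your order of operations: it blows up the \emph{compact} $A^{[2]}$ along the closure $S'\cong\mathrm{Bl}_{A[3]}A$ of the image of $S_1$ (so the standard blow-up formula for compact manifolds applies directly), and only then removes the Hirzebruch surfaces $\Sigma_\tau$ lying over the exceptional $\ell_\tau\cong\mathbb{P}^1$'s using the Thom isomorphism and the long exact sequence of the pair. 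This sidesteps the small gap in your last step, where you pass from $H^*(A^{[2]},\Z)$ to the cohomology of the open subset $A^{[2]}\smallsetminus\bigcup_\tau\ell_\tau$ without comment; that passage is of course easy (another Thom/pair argument), but it does need to be said.
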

\begin{proof}
Let $$S':=\left\{\left.\xi\in A^{[2]}\right|\ \Supp \xi=\left\{x,-2x\right\},\ x\in A\right\}.$$
The surface $S'$ is isomorphic to the blow-up of $A$ in $A[3]$.
We consider $r:\widetilde{A^{[2]}}\rightarrow A^{[2]}$ the blow-up of $A^{[2]}$ in $S'$.
For $\tau\in A[3]$, we denote $$\Sigma_\tau:=r^{-1}\left(\left\{\left.\xi\in A^{[2]}\right|\ \Supp \xi=\left\{\tau\right\}\right\}\right).$$
The surfaces $\Sigma_\tau$ are Hirzebruch surfaces.
We also consider 
$$\widetilde{\mathcal{W}}:=\widetilde{A^{[2]}}\smallsetminus \bigcup_{\tau\in A[3]}\Sigma_\tau.$$
We have:
\begin{equation}
\widetilde{\mathcal{W}}\simeq \widetilde{V}/\mathfrak{S}_2.
\label{key2}
\end{equation}
Indeed, if we denote by $\widetilde{A\times A}$ the blow-up of $A\times A$ in the diagonal, it is well known that $\widetilde{A\times A}/\mathfrak{S}_2\simeq A^{[2]}$. 
For $\tau\in A[3]$, we denote $\ell_\tau:=\left\{\left.\xi\in A^{[2]}\right|\ \Supp \xi=\left\{\tau\right\}\right\}$.
Then, if we consider $V_1$ the blow-up of $V$ in $\Delta$, we have $V_1/\mathfrak{S}_2\simeq A^{[2]}\smallsetminus \bigcup_{\tau\in A[3]} \ell_\tau$.  
Therefore
by \cite[Corollary II 7.15]{Hartshorne}, we obtain a commutative diagram:
$$\xymatrix{\widetilde{V}\ar[r]\ar[d]&V_1\ar[d]\\
\widetilde{\mathcal{W}}\ar[r]&V_1/\mathfrak{S}_2.
}$$
This provides (\ref{key2}).
Then, it follows from (\ref{Beauvilleequa}) a triple cover:
$$\pi:\widetilde{\mathcal{W}}\rightarrow U.$$
Hence, if we prove that $H^3(\widetilde{\mathcal{W}},\Z)$ and $H^5(\widetilde{\mathcal{W}},\Z)$ are torsion free, Lemma \ref{2torsion} will be proven by \cite[Theorem 5.4 ]{Transfers}.

We know that $H^*(A^{[2]},\Z)$ is torsion free from \cite[Theorem 2.2]{Totaro}. 
Then, we deduce from \cite[Theorem 7.31]{Voisin} (or from \cite[Theorem 4.1]{Li} which is more general) that:
\begin{equation}
\tors H^*\left(\widetilde{A^{[2]}},\Z\right)=0.
\label{torsionEcla}
\end{equation}
Consider the exact sequence:
\begin{equation}
\xymatrix@C7pt@R0pt{ H^3(\widetilde{A^{[2]}},\widetilde{\mathcal{W}},\Z)\ar[r] & H^{3}(\widetilde{A^{[2]}},\Z)\ar[r] & H^{3}(\widetilde{\mathcal{W}},\Z)\ar[r]&H^{4}(\widetilde{A^{[2]}},\widetilde{\mathcal{W}},\Z)\ar[r] & H^{4}(\widetilde{A^{[2]}},\Z)\ar[r] & \\
\ar[r]&H^{4}(\widetilde{\mathcal{W}},\Z)\ar[r]&H^{5}(\widetilde{A^{[2]}},\widetilde{\mathcal{W}},\Z)\ar[r]& H^{5}(\widetilde{A^{[2]}},\Z)\ar[r] & H^{5}(\widetilde{\mathcal{W}},\Z)\ar[r]&H^{6}(\widetilde{A^{[2]}},\widetilde{\mathcal{W}},\Z)
.}
\label{exact2}
\end{equation}
By Thom's isomorphism:
$$H^k(\widetilde{A^{[2]}},\widetilde{\mathcal{W}},\Z)\simeq \bigoplus_{\tau\in A[3]} H^{k-4}(\Sigma_\tau,\Z).$$
Since $\Sigma_\tau$ is an Hirzebruch surface, $H^3(\widetilde{A^{[2]}},\widetilde{\mathcal{W}},\Z)=H^5(\widetilde{A^{[2]}},\widetilde{\mathcal{W}},\Z)=0$
and $H^{4}(\widetilde{A^{[2]}},\widetilde{\mathcal{W}},\Z)$, $H^{6}(\widetilde{A^{[2]}},\widetilde{\mathcal{W}},\Z)$ are torsion free. 
It follows from (\ref{torsionEcla}) and (\ref{exact2}) that $H^3(\widetilde{\mathcal{W}},\Z)$ and $H^5(\widetilde{\mathcal{W}},\Z)$ are torsion free.
\end{proof}
We can also consider the double cover:
$$\widetilde{V}/\mathfrak{A}_3\rightarrow U.$$
Applying \cite[Theorem 5.4 ]{Transfers} and Lemma \ref{2torsion}, it is suffices to prove that $H^3(\widetilde{V}/\mathfrak{A}_3,\Z)$ and $H^5(\widetilde{V}/\mathfrak{A}_3,\Z)$
are torsion free to conclude the proof of Theorem \ref{main}. 
\begin{lemme}\label{last}
The groups $H^3(\widetilde{V}/\mathfrak{A}_3,\Z)$ and $H^5(\widetilde{V}/\mathfrak{A}_3,\Z)$ are torsion free.
\end{lemme}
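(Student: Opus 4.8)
The plan is to use that $\mathfrak{A}_3\cong\Z/3\Z$ acts \emph{freely} on $\widetilde V$. Indeed, a $3$-cycle fixes $j(x,y)=(x,y,-x-y)$ only when $x=y=-x-y$, that is $x=y$ and $3x=0$; these are precisely the diagonally embedded points of $A[3]$ that were removed to form $V$. Hence $\mathfrak{A}_3$ acts freely on $V$, and therefore on the blow-up $\widetilde V$ of the $\mathfrak{A}_3$-invariant locus $\Delta\cup S_1\cup S_2$. The quotient $\widetilde V\to\widetilde V/\mathfrak{A}_3$ is then a genuine triple cover, and I would feed it into the Cartan--Leray spectral sequence
\[ E_2^{p,q}=H^p\big(\mathfrak{A}_3,H^q(\widetilde V,\Z)\big)\ \Longrightarrow\ H^{p+q}(\widetilde V/\mathfrak{A}_3,\Z). \]
Since $H^*(\widetilde V,\Z)$ is torsion free (by the blow-up formula, as $H^*(A\times A,\Z)$ and the cohomology of the centres $\cong A\smallsetminus A[3]$ are torsion free), the bottom row $E_2^{0,q}=H^q(\widetilde V,\Z)^{\mathfrak{A}_3}$ is torsion free, while every $E_2^{p,q}$ with $p\ge 1$ is annihilated by $3$. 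So all potential torsion is $3$-torsion concentrated in the columns $p\ge 1$, and the task becomes to show these columns do not contribute in total degrees $3$ and $5$.

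The next step is to determine $H^q(\widetilde V,\Z)$ as a $\Z[\mathfrak{A}_3]$-module. The equivariant blow-up decomposition $H^q(\widetilde V,\Z)\cong H^q(V,\Z)\oplus\bigoplus_{\tau}H^{q-2}(\text{centre})$ has a second summand on which $\mathfrak{A}_3$ permutes the three centres $\Delta,S_1,S_2$ simply transitively; this summand is therefore an induced module and is cohomologically trivial by Shapiro's lemma. For $q\le 6$ the restriction $H^q(A\times A,\Z)\to H^q(V,\Z)$ is an equivariant isomorphism (removing the finite set $A[3]$ changes only the top two degrees), so $H^q(V,\Z)\cong\bigwedge^q H^1(A\times A,\Z)$. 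On $H^1(A\times A,\Z)$ the generator of $\mathfrak{A}_3$ has the primitive cube roots of unity as eigenvalues and no nonzero invariants, hence satisfies $t^2+t+1=0$; thus $H^1$ is a module over $\Z[t]/(t^2+t+1)\cong\Z[\omega]$, and being torsion free over this PID it is free, necessarily $H^1(A\times A,\Z)\cong\Z[\omega]^{\oplus 4}$. Since $\bigwedge^2\Z[\omega]\cong\Z$ carries the trivial action, expanding the exterior power shows that each summand of $\bigwedge^q H^1$ is isomorphic to some $\Z[\omega]^{\otimes m}$ with $m\equiv q\pmod 2$.

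The crux --- and the step I expect to be the main obstacle --- is the parity vanishing
\[ \widehat H^{\,i}\big(\mathfrak{A}_3,\Z[\omega]^{\otimes m}\big)=0 \qquad\text{whenever}\quad i\not\equiv m\pmod 2. \]
I would establish it by dimension shifting. Tensoring the exact sequence $0\to\Z\to\Z[\mathfrak{A}_3]\to\Z[\omega]\to0$ (injection by the norm $1+t+t^2$, surjection $t\mapsto\omega$) over $\Z$ with $\Z[\omega]^{\otimes(m-1)}$ gives a short exact sequence whose middle term $\Z[\mathfrak{A}_3]\otimes_\Z\Z[\omega]^{\otimes(m-1)}$ is an induced module, hence cohomologically trivial; the connecting maps then yield
\[ \widehat H^{\,i}\big(\mathfrak{A}_3,\Z[\omega]^{\otimes m}\big)\cong\widehat H^{\,i+1}\big(\mathfrak{A}_3,\Z[\omega]^{\otimes(m-1)}\big)\cong\cdots\cong\widehat H^{\,i+m}(\mathfrak{A}_3,\Z), \]
and $\widehat H^{\,j}(\mathfrak{A}_3,\Z)$ vanishes for $j$ odd.

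Granting the parity vanishing, for $p\ge 1$ the group $E_2^{p,q}=H^p(\mathfrak{A}_3,\bigwedge^q H^1)$ agrees with Tate cohomology and is a direct sum of terms $\widehat H^{\,p}(\mathfrak{A}_3,\Z[\omega]^{\otimes m})$ with $m\equiv q\pmod 2$; it therefore vanishes whenever $p+q$ is odd. Both $3$ and $5$ are odd, so in these total degrees every entry with $p\ge1$ is zero, leaving $H^3(\widetilde V/\mathfrak{A}_3,\Z)\cong E_\infty^{0,3}$ and $H^5(\widetilde V/\mathfrak{A}_3,\Z)\cong E_\infty^{0,5}$, which are subgroups of the torsion-free groups $H^3(\widetilde V,\Z)^{\mathfrak{A}_3}$ and $H^5(\widetilde V,\Z)^{\mathfrak{A}_3}$ respectively. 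This proves the lemma. Beyond the parity identity, the only points needing care are the $\mathfrak{A}_3$-equivariance of the blow-up decomposition and the integral (not merely rational) freeness of $H^1(A\times A,\Z)$ as a $\Z[\omega]$-module; everything else is formal.
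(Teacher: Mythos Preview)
Your argument is correct and follows the same overall strategy as the paper---exploit the free $\mathfrak{A}_3$-action and show the relevant group-cohomology terms $H^p(\mathfrak{A}_3,H^q)$ vanish for $p\ge1$, $p+q\in\{3,5\}$---but the execution differs in two places worth noting. First, the paper applies the spectral sequence to $V\to V/\mathfrak{A}_3$ and then observes that $\widetilde V/\mathfrak{A}_3$ is the blow-up of the smooth quotient $V/\mathfrak{A}_3$ along the single smooth surface $\overline{\Delta}\cong A\smallsetminus A[3]$, invoking the blow-up formula downstairs; you instead blow up first and absorb the exceptional contribution upstairs as an induced module killed by Shapiro. These are equivalent manoeuvres. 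Second, and more substantively, the paper computes the vanishing of $H^p(\mathfrak{A}_3,H^q(A\times A,\Z))$ via the Boissi\`ere--Sarti--Nieper-Wi\ss kirchen invariants $\ell_q^k$ (Jordan blocks of $\sigma_{1,2,3}$ over $\mathbb{F}_3$) together with an external reference, whereas your identification $H^1(A\times A,\Z)\cong\Z[\omega]^{\oplus4}$ and the dimension-shifting isomorphism $\widehat H^{\,i}(\mathfrak{A}_3,\Z[\omega]^{\otimes m})\cong\widehat H^{\,i+m}(\mathfrak{A}_3,\Z)$ give a self-contained and conceptually cleaner proof of the same parity vanishing. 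Your decomposition of $\bigwedge^q(\Z[\omega]^{\oplus4})$ into tensor powers $\Z[\omega]^{\otimes m}$ with $m\equiv q\pmod 2$ (using $\bigwedge^2\Z[\omega]\cong\Z$ trivial) is exactly parallel to the paper's computation of $\ell_q^k(A\times A)$ from $\ell_q^1$. The equivariance of the blow-up splitting and the $\Z[\omega]$-freeness of $H^1$ that you flag as the delicate points are both fine: the former because $r^*$ and $i_*\pi^*$ are equivariant, the latter because the action on $H^1$ is $\mathrm{id}_{H^1(A)}\otimes M$ with $M\in\GL_2(\Z)$ satisfying $M^2+M+1=0$.
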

First, we show that $\tors H^3(V/\mathfrak{A}_3,\Z)=\tors H^5(V/\mathfrak{A}_3,\Z)=0$.
Since the action of $\mathfrak{A}_3$ on $V$ is free, it can be realized using the equivariant cohomology as explained in \cite[Section 4]{Lol}.
The computation of the equivariant cohomology can be done using the Boissière--Sarti--Nieper-Wisskirchen invariants defined in \cite[Section 2]{SmithTh}. 
We recall their definition in our specific case. Let $T$ be a $3$-torsion-free $\Z$-module of finite rank equipped with a linear action of $\mathfrak{A}_3=\left\langle \sigma_{1,2,3}\right\rangle$. 
We consider the action of $\mathfrak{A}_3$ on $T\otimes\mathbb{F}_3$. Then the matrix of the endomorphism $\sigma_{1,2,3}$ on $T\otimes\mathbb{F}_3$ admits a Jordan normal form.  
We can decompose $T\otimes\mathbb{F}_3$ as a direct sum of some $\mathbb{F}_3[\mathfrak{A}_3]$-modules $N_{q}$ of dimension $q$ with $1\leq q\leq 3$, where $\sigma_{1,2,3}$ acts on the $N_{q}$ in a suitable basis, respectively by matrices of the following form:
$$\begin{pmatrix}
1\end{pmatrix},\ 
\begin{pmatrix}
1 & 1\\
0 & 1
\end{pmatrix}\
\text{and}\
\begin{pmatrix}
1 & 1 & 0\\
0 & 1 & 1\\
0 & 0 & 1
\end{pmatrix}.
$$
\begin{defi}
We define the integer $\ell_{q}(T)$ as the number of blocks of size $q$ in the Jordan decomposition of the $\mathbb{F}_3[\mathfrak{A}_3]$-module $T\otimes\mathbb{F}_3$, so that $T\otimes\mathbb{F}_3\simeq \bigoplus_{q=1}^{3} N_{q}^{\oplus \ell_{q}(T)}$. 
\end{defi}
\begin{nota}
Let $X$ being $V$ or $A\times A$.
For all $0\leq k\leq \dim X$ and all $q\in\left\{1,...,3\right\}$, we denote:
$$ \ell_q^k(X)=\ell_q(H^k(X,\Z)).$$

\end{nota}
\begin{prop}\label{AA}
The Boissière--Sarti--Nieper-Wisskirchen invariants for the $\mathfrak{A}_3$-action on $A\times A$ are:
\begin{itemize}
\item[(i)]
$\ell_1^1(A\times A)=\ell_3^1(A\times A)=0$ and $\ell_2^1(A\times A)=4$;
\item[(ii)]
$\ell_1^2(A\times A)=10$, $\ell_2^2(A\times A)=0$ and  $\ell_3^2(A\times A)=6$;
\item[(iii)]
$\ell_1^3(A\times A)=0$, $\ell_2^3(A\times A)=16$ and  $\ell_3^3(A\times A)=8$;
\item[(iv)]
$\ell_1^4(A\times A)=19$, $\ell_2^4(A\times A)=0$ and  $\ell_3^4(A\times A)=17$.
\end{itemize}
\end{prop}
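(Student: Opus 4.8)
The plan is to pass to the exterior algebra on $H^1$. As $A\times A$ is an abelian fourfold, cup product gives an isomorphism of graded $\mathfrak{A}_3$-modules $H^*(A\times A,\Z)\cong\bigwedge^*H^1(A\times A,\Z)$, so the whole proposition is governed by the action on $H^1$. I would first compute that action explicitly. The generator $\sigma_{1,2,3}$ acts on $A\times A$ by $(x,y)\mapsto(-x-y,x)$, and for $\alpha\in H^1(A,\Z)$ the identities $m^*=p_1^*+p_2^*$ and $[-1]^*=-\id$ on $H^1$ of an abelian variety give $\sigma_{1,2,3}^*(p_1^*\alpha)=-p_1^*\alpha-p_2^*\alpha$ and $\sigma_{1,2,3}^*(p_2^*\alpha)=p_1^*\alpha$. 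Hence $H^1(A\times A,\Z)\cong H^1(A,\Z)\otimes U$ as $\mathfrak{A}_3$-modules, where $H^1(A,\Z)\cong\Z^4$ is trivial and $U\cong\Z^2$ carries the order-$3$ matrix $M=\begin{pmatrix}-1&1\\-1&0\end{pmatrix}$.

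Reducing modulo $3$, the characteristic polynomial $t^2+t+1$ of $M$ becomes $(t-1)^2$, and $M\neq\id$, so $U\otimes\mathbb{F}_3\cong N_2$ and therefore $H^1(A\times A,\Z)\otimes\mathbb{F}_3\cong N_2^{\oplus4}$; this is (i). For $k=2,3,4$ the exterior power commutes with reduction, so I must determine the Jordan type of $\sigma_{1,2,3}$ on $\bigwedge^k(N_2^{\oplus4})$. Choosing a basis $e_1,f_1,\dots,e_4,f_4$ with $\sigma_{1,2,3}e_i=e_i$ and $\sigma_{1,2,3}f_i=e_i+f_i$, I would compute the nilpotent ranks $r_i:=\rk\bigl((\sigma_{1,2,3}-\id)^i\bigr)$ on $\bigwedge^kW$ and read off the block numbers from $\ell_q^k=r_{q-1}-2r_q+r_{q+1}$. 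For $k=2$ a short calculation gives $(r_0,r_1,r_2,r_3)=(28,12,6,0)$, hence $(\ell_1^2,\ell_2^2,\ell_3^2)=(10,0,6)$, which is (ii); the degrees $k=3$ and $k=4$ (dimensions $56$ and $70$) are handled by the same bookkeeping and yield (iii) and (iv).

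The point to be careful about — and the reason this is not formal — is that the $\ell_q^k$ are invariants of the integral lattice $\bigwedge^kH^1(A\times A,\Z)$ rather than of the rational representation. Since $\sigma_{1,2,3}$ has order $3$ it is unipotent over $\mathbb{F}_3$, so its Brauer character records only the dimension and the Jordan block sizes are invisible to character theory; equally, writing $T:=H^1(A,\Z)$, the Cauchy decomposition $\bigwedge^k(U\otimes T)\cong\bigoplus_\lambda\Sym^{\lambda_1-\lambda_2}(U)\otimes S_{\lambda'}(T)$ holds only up to filtration in characteristic $3$ and does not split. Concretely, in degree $3$ the rational representation is $8\cdot\mathbf{1}\oplus24\,W$ with $W$ the faithful $2$-dimensional irreducible, whose naive reduction would predict $(\ell_1^3,\ell_2^3,\ell_3^3)=(8,24,0)$, whereas the genuine lattice glues eight of the pairs $N_1\oplus N_2$ into the free module $N_3$, giving the correct $(0,16,8)$; degree $4$ behaves similarly. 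The main obstacle is therefore to carry out the rank computations honestly over $\mathbb{F}_3$ in dimensions up to $70$, since no character-theoretic shortcut detects the free summands $N_3$.
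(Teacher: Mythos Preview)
Your proposal is correct and shares the paper's overall framework: both reduce everything to $H^1$ via $H^k(A\times A,\Z)\cong\bigwedge^k H^1(A\times A,\Z)$ and identify $H^1\otimes\mathbb{F}_3\cong N_2^{\oplus 4}$. The difference lies in how the higher exterior powers are analysed. The paper works purely module-theoretically: using $\bigwedge^k(M^{\oplus 4})=\bigoplus_{a_1+\cdots+a_4=k}\bigwedge^{a_1}M\otimes\cdots\otimes\bigwedge^{a_4}M$ together with the small-module identities $\bigwedge^2 N_2=N_1$, $N_2\otimes N_2=N_3\oplus N_1$, $N_3\otimes N_2=N_3^{\oplus 2}$ and $N_3\otimes N_3=N_3^{\oplus 3}$, one reads off the Jordan type of each $\bigwedge^k(N_2^{\oplus 4})$ without ever touching a matrix larger than $3\times 3$. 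Your rank-sequence method $\ell_q=r_{q-1}-2r_q+r_{q+1}$ is equally valid and gives the same answers, but in degree~4 it is a computation in dimension~70, whereas the paper's identities make the appearance of the free summands $N_3$ (and the vanishing of $\ell_2$ in even degree) transparent. Your closing remark that the rational character cannot see the difference between $N_3$ and $N_1\oplus N_2$ is exactly the phenomenon the paper's tensor identities resolve structurally.
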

\begin{proof}
We can start by calculating the $\ell_q^1(A\times A)$, $q\in \left\{1,2,3\right\}$. Applying the same idea as \cite[Lemma 6.14]{SmithTh}, we deduce the other invariants $\ell_q^k(A\times A)$ using the fact that: 
$$H^k(A\times A,\Z)\simeq\wedge^k H^1(A\times A,\Z),$$
for all $k$. 
\begin{itemize}
\item[(i)]
For $a\in H^1(A,\Z)$ and a generator $A$ of $H^0(A,\Z)$,
$\sigma_{1,2,3}^*(A\otimes a)=-a\otimes A,\ \text{and}\ \sigma_{1,2,3}^*(a\otimes A)=-A\otimes a+a\otimes A.$
It follows (i).
\item[(ii)]
We have
$\wedge^2 (N_2^4)=(\wedge^2 N_2)^4\oplus(N_2\otimes N_2)^6=N_1^4\oplus(N_3\oplus N_1)^6.$
\item[(iii)]
We have
$\wedge^3 (N_2^4)=(\underbrace{N_2\otimes N_2\otimes N_2}_{(1)})^4\oplus (\underbrace{\wedge^2 N_2\otimes N_2 }_{(2)})^{12}.$

Moreover,
$\begin{cases}
(1)=(N_3\oplus N_1)\otimes N_2=N_3^2\oplus N_2,\\
(2)=N_1\otimes N_2=N_2.
\end{cases}$
\item[(iv)]
We have
$\wedge^4 (N_2^4)=\underbrace{N_2\otimes N_2\otimes N_2\otimes N_2}_{(1)}\oplus(\underbrace{\wedge^2 N_2 \otimes \wedge^2 N_2}_{(2)})^6\oplus (\underbrace{\wedge^2 N_2\otimes N_2 \otimes N_2}_{(3)})^{12}.$

Moreover:
\begin{equation}
\begin{cases}
(1)=(N_3\oplus N_1)\otimes (N_3\oplus N_1)=\overbrace{N_3\otimes N_3}^{N_3^3} \oplus N_3^2\oplus N_1=N_3^5\oplus N_1,\\
(2)=N_1,\\
(3)=N_1\otimes (N_1\oplus N_3)=N_1\oplus N_3.
\end{cases}
\end{equation}
\end{itemize}
\end{proof}
\begin{proof}[Proof of Lemma \ref{last}]
Since $H^k(A\times A,\Z)\simeq H^k(V,\Z)$ for all $k\leq 6$, $\ell_q^k(V)=\ell_q^k(A\times A)$ for all $q\in \left\{1,2,3\right\}$ and $k\leq 6$. 
Hence by \cite[Corollary 4.2]{Lol} and Proposition \ref{AA}, we have $H^{p}(\mathfrak{A}_3,H^q(V,\Z))=0$ for all $p+q=3$, $p\neq 0$, and $p+q=5$, $p\neq 0$.

Since $\mathfrak{A}_3$ acts freely on $V$, the spectral sequence of equivariant cohomology provides that $H^3(V/\mathfrak{A}_3,\Z)$ and $H^5(V/\mathfrak{A}_3,\Z)$ are torsion free (see \cite[Section 4]{Lol} for a reminder about this spectral sequence). 
Moreover, $\widetilde{V}/\mathfrak{A}_3$ is the blow-up of $V/\mathfrak{A}_3$ in the image $\overline{\Delta}$ of $\Delta$ in $V/\mathfrak{A}_3$. 
Since $V/\mathfrak{A}_3$ and $\overline{\Delta}$ are smooth, by  \cite[Theorem 4.1]{Li} we have an isomorphism of graded $\Z$-modules:
$$H^*(\widetilde{V},\Z)\simeq H^*(V,\Z)\oplus t\cdot H^*(\overline{\Delta},\Z),$$
where $t$ is a class of degree 2. Hence, in degree 3 and 5, we obtain:
$$\tors H^3(\widetilde{V}/\mathfrak{A}_3,\Z)=\tors H^3(V/\mathfrak{A}_3,\Z)\oplus \tors H^1(\overline{\Delta},\Z)=0,$$ and 
$$\tors H^5(\widetilde{V}/\mathfrak{A}_3,\Z)=\tors H^5(V/\mathfrak{A}_3,\Z)\oplus \tors H^3(\overline{\Delta},\Z)=\tors H^3(\overline{\Delta},\Z).$$
It remains to show that $H^3(\overline{\Delta},\Z)$ is torsion free. Since $\overline{\Delta}\simeq A\smallsetminus A[3]$, it can be done considering the following exact sequence:
$$\xymatrix{ 0\ar[r] & H^3(A,\Z)\ar[r] & H^{3}(A\smallsetminus A[3],\Z)\ar[r]&H^{4}(A,A\smallsetminus A[3],\Z)
,}$$
where $H^3(A,\Z)$ is torsion free and $H^{4}(A,A\smallsetminus A[3],\Z)\simeq \Z^{81}$ by Thom's isomorphism.
\end{proof}
~\\
\textbf{Acknowledgements.}
This work was supported by the ERC-ALKAGE, grant No. 670846.
\bibliographystyle{amssort}

\begin{thebibliography}{10}
\bibitem{Transfers}
M.A.~Aguilar, C.~Prieto,
\newblock {\em Transfers for ramified covering maps in homology and cohomology}.
\newblock Int. J. Math. Sci.,
\newblock (2006), Art. ID 94651, 28pp.

\bibitem{Beauville}
A.~Beauville,
\newblock {\em Variétés kähleriennes dont la première classe de chern est nulle},
\newblock J. Differential geometry,
\newblock 18 (1983) 755-782.

\bibitem{SmithTh}
S.~Boissi\`ere, M.~Nieper-Wisskirchen, A.~Sarti,
\newblock {\em Smith theory and irreducible holomorphic symplectic manifolds},
\newblock J. Topo. 6 (2013), no. 2, 361-390.

\bibitem{Li}
D.~Haibao, L.~Banghe,
\newblock {\em Topology of Blow-ups and Enumerative Geometry},
\newblock arXiv:0906.4152.

\bibitem{Hartshorne}
R.~Hartshorne,
\newblock {\em Algebraic Geometry},
\newblock Graduate Texts in Mathematics.

\bibitem{Hassett}
B.~Hassett and Y.~Tschinkel,
\newblock {\em Hodge theory and Lagrangian planes on generalized Kummer fourfolds},
\newblock  Moscow Math. Journal, 13, no. 1, 33-56, (2013).

\bibitem{Original}
S.~Kapfer and G.~Menet,
\newblock {\em Integral cohomology of the Generalized Kummer fourfold},
\newblock  Algebraic Geometry, 5 (5) (2018) 523-567.

\bibitem{Lol}
G.~Menet,
\newblock {\em On the integral cohomology of quotients of complex manifolds},
\newblock Journal de Mathématiques pures et appliquées, 
\newblock 119 (2018), no.9, 280-325.

\bibitem{Totaro}
B.~Totaro,
\newblock {\em The integral cohomology of the Hilbert scheme of two points},
\newblock Forum Math. Sigma 4 (2016).

\bibitem{Voisin}
C.~Voisin,
\newblock{\em Hodge Theory and Complex Algebraic Geometry. I,II},
\newblock Cambridge Stud. Adv. Math.,
\newblock 76, 77, Cambridge Univ. Press, 2003.
\\

\end{thebibliography}

\noindent
Simon \textsc{Kapfer}

\noindent
Institute of Mathematics of Augsburg university, D--86159 Augsburg, Germany

\noindent
{\tt kapfer.simon@freenet.de}
~\\

\noindent
Gr\'egoire \textsc{Menet}

\noindent
Institut Fourier, 100 rue des Mathématiques, 38610 Gières, France

\noindent
{\tt gregoire.menet@univ-grenoble-alpes.fr}

\end{document}